\definecolor{red}{rgb}{1,0.00,0.00}
\newtheorem{theorem}{Theorem}
\newtheorem{proposition}[theorem]{Proposition}
\theoremstyle{definition}
\author{Andriy Oliynyk and Veronika Prokhorchuk}
\title{\textbf{On exponentiation, $p$-automata and HNN extensions of free abelian groups}}
\date{}
\begin{document}

\maketitle

\begin{abstract}
    For every prime $p$ it is shown that a wide class of HNN extensions of free abelian groups admit faithful representation by finite $p$-automata.
\end{abstract}

\section{Introduction}
\label{section_Introduction}

Natural action of the wreath product of permutation groups $(G,\mathsf{X})$ and $(H,\mathsf{Y})$ on the Cartesian product $\mathsf{X} \times \mathsf{Y}$ is widely used. The other action of this wreath product on the set $\mathsf{Y}^{\mathsf{X}}$ of all functions from $\mathsf{X}$ to $\mathsf{Y}$ is called exponentiation.
This action was defined in~\cite{MR0107666} as a formalization of group actions used in~\cite{MR0002510} to enumerate types of Boolean functions (cf.~\cite{MR0409266,MR2604956}).  
Its basic properties as a permutation group were obtained in~\cite{MR0554500}. Exponentiation was applied to construct and study new strongly regular graphs (\cite{MR0603646,MR0617207}), to study automorphism group of the {$n$}-dimensional cube (\cite{MR1801422,MR1897935}), to construct new finite Gelfand pairs (\cite{MR2263713}) and to construct new finitely generated profinite groups (\cite{MR3391479}). 
We observe that exponentiation can be applied to construct groups defined by finite automata.

Finite automata and groups defined by automata form a valuble direction in modern mathematics (see e.g.~\cite{MR1841755,MR2162164}). Given a finite (permutational) automaton over a finite alphabet $\mathsf{X}$ a naturally defined permutation group, the group of this automaton, acting by automorphisms on free monoid $\mathsf{X}^*$ as on a rooted tree is defined. The other way to define a group by an automaton is to generate it by a subset of the generating set of the group of this automaton. Given a group defined by an automaton over some alphabet it is naturally to decrease size of the alphabet such that this group can be defined by an automaton over minimized alphabet. Moreover, additional restrictions on permutations defined at states of automata can be applied (\cite{MR2796035}).

In the present paper we consider this problem for HNN extensions of free abelian groups to extend results of~\cite{MR4296450}. We use automata constructed in~\cite{MR2216703} such that their groups are required HNN extensions. As the main result for any prime $p$ we give sufficient conditions on an ascending HNN extension of a free abelian group of finite rank under which this HNN extension can be defined by a finite automaton over an alphabet of cardinality $p$ and all permutations on the alphabet at the states are powers of a certain cycle of length $p$.

The paper organized as follows. In Section~\ref{section_wreath_products} we recall required definitions regarding wreath products and exponentiation. We observe in Theorem~\ref{thm_exponentiation_acts_on_tree} a sufficient condition to represent an exponentiation as a permutation group in terms of a permutational wreath product acting on a finite rooted tree. Here we also give a constructive example of such a representation. In Section~\ref{section_automata_and_groups_defined_by_them} we briefly recall required notions on finite automata and groups defined by them. In Theorem~\ref{thm_minimize_group_at_states} we give a sufficient condition for a group defined by an automaton to decrease the size of the alphabet such that it can be defined by an automaton over minimized alphabet. In Section~\ref{section_hnn_actions_defined_by_automata} we use these statements to prove the main result of the paper.

All used notions and properties about trees, automata and groups are standard and can be found e.g. in~\cite{MR1841755,MR2162164}.

\section{Wreath products and their actions}
\label{section_wreath_products}
\subsection{Wreath products}

Let $(G,\mathsf{X})$ and $(H,\mathsf{Y})$ be permutation groups. The wreath product of  $(G,\mathsf{X})$ with $H$ is defined as the semidirect product 
\[
G \ltimes H^\mathsf{X}
\]
where the action of $G$ on the set of functions $H^\mathsf{X}$ is induced by its action on $\mathsf{X}$.
It is denoted by $G \wr H$ and  consists of the pairs
$[g, h(x)]$, where $g \in G$, $h(x): \mathsf{X} \to H$.
Such a pair acts on the Cartesian product $\mathsf{X} \times \mathsf{Y}$ by the rule
\[
(x,y)^{[g,h(x)]}=(x^g,y^{h(x)}), \quad x\in \mathsf{X}, y \in \mathsf{Y}.
\]
The permutation group $(G \wr H, \mathsf{X} \times \mathsf{Y})$ is called the permutational wreath product of $(G,\mathsf{X})$ and $(H,\mathsf{Y})$.

Being an associative operation on permutation groups their wreath product can be defined for arbitrary finite sequence 
\[
(G_1,\mathsf{X}_1), (G_2,\mathsf{X}_2), \ldots , (G_n,\mathsf{X}_n)
\]
of permutation groups. In this case it is denoted by $\wr_{i=1}^n G_i$ and consists of tuples
\[
[g_1,g_2(x_1),\ldots , g_n(x_1,\ldots,x_{n-1})],
\]
where 
\[
g_1 \in G_1, \quad g_2(x_1):\mathsf{X}_1 \to G_2,  \ldots,  g_n(x_1,\ldots,x_{n-1}):\mathsf{X}_1 \times \ldots \times \mathsf{X}_{n-1} \to G_n.
\]
It acts on the sets $\varnothing$, $\mathsf{X}_1$, $\mathsf{X}_1 \times \mathsf{X}_2$, $\ldots ,\mathsf{X}_1 \times \ldots \times \mathsf{X}_{n}$ preserving the natural structure of a rooted tree on their union. Hence, the permutational wreath product $\wr_{i=1}^n G_i$ can be viewed as an automorphism group of a homogeneous rooted tree acting on the set of its leaves.

\subsection{Exponentiation}

For permutation groups $(G,\mathsf{X})$ and $(H,\mathsf{Y})$ the action of the wreath product of $(G,\mathsf{X})$ with $H$ on the set $\mathsf{Y}^{\mathsf{X}}$ can be defined as follows.

The exponentiation of $(H,\mathsf{Y})$ by $(G,\mathsf{X})$ is the permutation group
\[
H \uparrow G = (G \wr H, \mathsf{Y}^{\mathsf{X}})
\]
such that every $[g,h(x)] \in G \wr H$ acts on $f(t) \in \mathsf{Y}^{\mathsf{X}}$ by the rule
\[
f(t)^{[g,h(x)]} = (f(t^g))^{h(t)}.
\]

Since for finite $\mathsf{X}$ and $\mathsf{Y}$ the degrees of permutation groups $(G \wr H, \mathsf{X} \times \mathsf{Y})$ and $(G \wr H, \mathsf{Y}^{\mathsf{X}})$ are not equal these groups are not isomorphic as permutation groups. Moreover, exponentiation is not an associative operation on permutation groups. However, it is natural to ask about existence of an isomorphism between the permutation group $H \uparrow G$ and an automorphism group of a homogeneous rooted tree acting on the set of its leaves. 

In general such an isomorphism does not exist. For instance, consider the exponentiation $\mathbb{Z}_2 \uparrow \mathbb{Z}_3$ of the regular cyclic group of order 2 by the regular cyclic group of order 3. It is a permutation group of degree $8$ and contains elements of order 3. From the other hand the automorphism group of the 2-regular rooted tree of depth $3$ has exactly 8 leaves and no elements of order 3.

We give a sufficient condition under which a required isomorphism exists.

\begin{theorem}
\label{thm_exponentiation_acts_on_tree}
    Let $p$ be a prime, $G$ and $H$ be finite $p$-groups faithfully acting on sets $\mathsf{X}$ and $\mathsf{Y}$ of cardinalities $p^n$ and $p^m$ correspondingly, $n,m\ge 0$. Then the exponentiation $H \uparrow G$ is isomorphic as a permutation group to the wreath product of $m \cdot p^n$ copies of the regular cyclic group or order $p$ acting by automorphisms on the set of leaves of the $p$-regular rooted tree of depth $m\cdot p^n$.
\end{theorem}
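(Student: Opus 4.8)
The plan is to reduce the statement to Kaloujnine's classical description of the Sylow $p$-subgroups of finite symmetric groups. Unwinding the definitions, $H\uparrow G$ is the permutation group whose underlying abstract group is $G\wr H=G\ltimes H^{\mathsf X}$; since $\mathsf X$ is finite, $H^{\mathsf X}$ is a direct power of the finite $p$-group $H$ and hence a finite $p$-group, so the extension $G\ltimes H^{\mathsf X}$ is again a finite $p$-group, and its degree on $\mathsf Y^{\mathsf X}$ is $|\mathsf Y|^{|\mathsf X|}=(p^{m})^{p^{n}}=p^{mp^{n}}$, a power of $p$. The action on $\mathsf Y^{\mathsf X}$ is faithful: if $[g,h(x)]$ fixes every $f\in\mathsf Y^{\mathsf X}$, then evaluating on the constant functions forces each $h(t)$ to fix $\mathsf Y$ pointwise, hence $h(t)=e$ for all $t$ by faithfulness of $(H,\mathsf Y)$, and then evaluating on a nonconstant $f$ forces $t^{g}=t$ for all $t$, hence $g=e$ by faithfulness of $(G,\mathsf X)$.

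Write $W_{k}$ for the iterated permutational wreath product of $k$ copies of $(C_{p},\mathbb Z_{p})$, acting on the $p^{k}$ leaves $\partial T_{k}$ of the $p$-regular rooted tree $T_{k}$ of depth $k$. By construction every vertex permutation occurring in an element of $W_{k}$ is a power of the standard $p$-cycle $\sigma=(0\,1\,\cdots\,p-1)$; and $|W_{k}|=p^{1+p+\cdots+p^{k-1}}=p^{(p^{k}-1)/(p-1)}$, which by Legendre's formula is exactly the power of $p$ dividing $(p^{k})!$, so $W_{k}$ is a Sylow $p$-subgroup of $\operatorname{Sym}(\partial T_{k})\cong S_{p^{k}}$. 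Now apply this with $k=mp^{n}$: fixing any bijection $\mathsf Y^{\mathsf X}\to\partial T_{mp^{n}}$ and transporting the action exhibits $H\uparrow G$ as a $p$-subgroup of $\operatorname{Sym}(\partial T_{mp^{n}})\cong S_{p^{mp^{n}}}$; it therefore lies in some Sylow $p$-subgroup, which by Sylow's theorem is a conjugate of $W_{mp^{n}}$, and absorbing the conjugating permutation into the bijection we obtain an identification of $\mathsf Y^{\mathsf X}$ with $\partial T_{mp^{n}}$ under which $H\uparrow G$ becomes a group of automorphisms of $T_{mp^{n}}$, acting on its leaves, with every vertex permutation a power of $\sigma$ — which is the asserted realization of $H\uparrow G$ inside the wreath product of $mp^{n}$ copies of $(C_{p},\mathbb Z_{p})$.

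The content not captured by this soft argument — and what the constructive example in the section is meant to supply — is an explicit such identification, which cannot be read off Sylow's theorem. The natural plan for that is: fix maximal chains of $H$-invariant partitions of $\mathsf Y$ and of $G$-invariant partitions of $\mathsf X$ with all successive indices equal to $p$ (these exist because $H$ and $G$ are $p$-groups, in which any subgroup sits in a subgroup chain with all indices $p$ up to the whole group), thereby realizing $(H,\mathsf Y)$ on $T_{m}$ and $(G,\mathsf X)$ on $T_{n}$; and then lift these to a chain of $(H\uparrow G)$-invariant partitions of $\mathsf Y^{\mathsf X}$. The last step is where I expect the real difficulty to lie: the coordinatewise action of $H^{\mathsf X}$ wants the $mp^{n}$ elementary $\mathbb Z_{p}$-coordinates ordered so that each of the $|\mathsf X|$ blocks of $m$ coordinates is read consecutively, whereas the coordinate-permuting action of $G$ wants the opposite grouping, so neither the row-wise nor the column-wise reading of the $|\mathsf X|\times m$ array of $\mathsf Y$-coordinates makes both actions tree automorphisms; the chain must instead be chosen so that $G$ only ever permutes sibling subtrees while $H$ only ever acts inside subtrees. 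In the basic case $G=H=C_{p}$ acting regularly this is resolved by identifying $\mathsf Y^{\mathsf X}$ with the group algebra $\mathbb F_{p}C_{p}\cong\mathbb F_{p}[x]/((x-1)^{p})$, on which the coordinate shift acts by multiplication, and taking its unique maximal chain of invariant submodules $((x-1)^{j})_{j=0}^{p}$ — equivalently, expanding $f\colon\mathsf X\to\mathsf Y$ in the Newton (falling-factorial) basis and reading the coefficients in reverse order, in which coordinates both generators of $H\uparrow G$ become $\sigma$-labelled tree automorphisms. Verifying in general that the exponentiation rule $f(t)^{[g,h(x)]}=f(t^{g})^{h(t)}$ is carried to a tree automorphism with $\sigma$-power labels is then routine, if lengthy, bookkeeping.
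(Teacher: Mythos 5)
Your argument is correct and is essentially the paper's own proof: observe that $H\uparrow G$ is a finite $p$-group of degree $p^{mp^n}$, embed it in a Sylow $p$-subgroup of the symmetric group on $\mathsf{Y}^{\mathsf{X}}$, and invoke Kaloujnine's identification of that Sylow subgroup with the iterated wreath product of $mp^n$ copies of $C_p$ acting on the leaves of the depth-$mp^n$ tree (so, like the paper, you actually prove the intended reading ``isomorphic to a subgroup of'' the wreath product). Your closing paragraph on making the identification explicit goes beyond what the paper proves here and corresponds to its worked example for $\mathbb{Z}_3\uparrow\mathbb{Z}_3$, which the paper also only exhibits by a concrete table rather than by a general construction.
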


\begin{proof}
    Under conditions of the Theorem the exponentiation $H \uparrow G$ is a permutation group of order $p^{n+m\cdot p^{n}}$ and degree $p^{m\cdot p^{n}}$. Then it can be considered as a $p$-subgroup of the symmetric group of degree $p^{m\cdot p^{n}}$. Hence, by Sylow's Theorem it is contained in a Sylow $p$-subgroup of this symmetric group. Sylow $p$-subgroup of the symmetric group of degree $p^{m\cdot p^{n}}$ is isomorphic to the wreath product of $m \cdot p^n$ copies of the regular cyclic group or order $p$ (\cite{MR0014087}). The statement immediately follows.
\end{proof}

\subsection{Example}

Let $p=3$. Consider the additive group $\mathbb{Z}_3=\{0,1,2\}$ of residues modulo 3 regularly acting on itself. Theorem~\ref{thm_exponentiation_acts_on_tree} implies that the exponentiation $\mathbb{Z}_3 \uparrow \mathbb{Z}_3$ as a permutation group is isomorphic to a subgroup of the permutation group
\[
(\mathbb{Z}_3 \wr \mathbb{Z}_3 \wr \mathbb{Z}_3, \mathbb{Z}_3^3).
\]
An example of an isomorphism can be constructed as follows.  

Let
\[
a = [1, (0,0,0)], \quad b = (0, (1,0,0))]
\]
be elements of the wreath product $\mathbb{Z}_3 \wr \mathbb{Z}_3$. They form a generating set of this group. Denote by $\sigma_a$ and $\sigma_b $ permutations on $\mathbb{Z}_3^3$ defined by $a$ and $b$ correspondingly.

Consider elements 
\[
c = [c_1,c_2(x_1),c_3(x_1,x_2)],\quad  d = [d_1,d_2(x_1),d_3(x_1,x_2)] \in \mathbb{Z}_3 \wr \mathbb{Z}_3 \wr \mathbb{Z}_3
\]
such that 
\[
c_1=0, \quad
c_2(x_1)=
\begin{cases}
    2, & \text{ if } x_1=0 \\
    1, & \text{ if } x_1=1 \\
    0, & \text{ if } x_1=2
\end{cases}, \quad
c_3(x_1,x_2)=
\begin{cases}
    2, & \text{ if } x_1=2, x_2=0 \\
    1, & \text{ if } x_1=2, x_2 =1 \\
    0, & \text{ otherwise }
\end{cases},
\]
\[
d_1=1, \quad
d_2(x_1)=0, x_1 \in \mathbb{Z}_3, \quad
d_3(x_1,x_2)=
\begin{cases}
    1, & \text{ if } x_1=0, x_2\ne 2 \\    
    0, & \text{ otherwise }
\end{cases}.
\]
Denote by $\sigma_c$ and $\sigma_d$ permutations on $\mathbb{Z}_3^3$ defined by $c$ and $d$ correspondingly.

Define a bijection $\pi : \mathbb{Z}_3^3 \to \mathbb{Z}_3^3$ as follows:
\begin{align*}
(0,0,0) & \mapsto (0,0,1), & (0,0,1) & \mapsto (1,1,2), & (0,0,2) & \mapsto (2,2,0), \\
(0,1,0) & \mapsto (0,1,0), & (0,1,1) & \mapsto (1,2,1), & (0,1,2) & \mapsto (2,0,2), \\
(0,2,0) & \mapsto (1,0,0), & (0,2,1) & \mapsto (2,1,1), & (0,2,2) & \mapsto (0,2,2), \\
(1,0,0) & \mapsto (0,2,0), & (1,0,1) & \mapsto (1,0,1), & (1,0,2) & \mapsto (2,1,2), \\
(1,1,0) & \mapsto (0,0,2), & (1,1,1) & \mapsto (1,1,0), & (1,1,2) & \mapsto (2,2,1), \\
(1,2,0) & \mapsto (2,0,0), & (1,2,1) & \mapsto (0,1,1), & (1,2,2) & \mapsto (1,2,2), \\
(2,0,0) & \mapsto (1,2,0), & (2,0,1) & \mapsto (2,0,1), & (2,0,2) & \mapsto (0,1,2), \\
(2,1,0) & \mapsto (1,0,2), & (2,1,1) & \mapsto (2,1,0), & (2,1,2) & \mapsto (0,2,1), \\
(2,2,0) & \mapsto (0,0,0), & (2,2,1) & \mapsto (1,1,1), & (2,2,2) & \mapsto (2,2,2).
\end{align*}

Then for arbitrary $\alpha \in \mathbb{Z}_3^3$ the following equalities hold:
\[
\sigma_a(\alpha) = \pi (\sigma_c( \pi^{-1} (\alpha))), \quad 
\sigma_b(\alpha) = \pi (\sigma_d( \pi^{-1} (\alpha))).
\]
The required isomorphism now directly follows.

\section{Automata and groups defined by automata}
\label{section_automata_and_groups_defined_by_them}

\subsection{Words and automata}

Let $\mathsf{X}$ be a finite set, $|\mathsf{X}|>1$. The set $\mathsf{X}^*=\cup_{i=0}^{\infty}\mathsf{X}^i$ of all finite words over $\mathsf{X}$, including the empty word $\Lambda$, form a free monoid with basis $\mathsf{X}$ under concatenation. The length of a word $w \in \mathsf{X}^*$ is denoted by $|w|$, i.e. $w \in \mathsf{X}^{|w|}$. The right Cayley graph of $\mathsf{X}^*$ with respect to basis $\mathsf{X}$ defines on $\mathsf{X}^*$ as on the vertex set the structure of a regular tooted tree. Two words $u,v \in \mathsf{X}^*$ are joined by an edge if and only if $u = vx$ or $v =ux$ for some $x \in \mathsf{X}$.  For every $n\ge 0$ the set $\mathsf{X}^n$ form the $n$th level of this tree and  the union $\cup_{i=0}^{n} \mathsf{X}^n$ is the vertex set of a regular rooted subtree of depth $n$.

A finite automaton over alphabet $\mathsf{X}$ is a triple $\mathcal{A}=(Q, \lambda, \mu)$ such that $Q$ is a finite non-empty set, the set of states, $\lambda : Q \times \mathsf{X} \to Q$ is the transition function, $\mu : Q \times \mathsf{X} \to Q$ is the output function. Automaton $\mathcal{A}$ is called permutational if for every $q \in Q$ the restriction $\mu_q : \mathsf{X} \to \mathsf{X}$ of the output function at state $q$ is a permutation on $\mathsf{X}$. We will consider finite permutational automata only.

In case $|\mathsf{X}|=p$ for some prime $p$ and all permutations $\mu_q$, $q\in Q$, are powers of a fixed cycle of length $p$ on $\mathsf{X}$ automaton $\mathcal{A}$ is called $p$-automaton.

\subsection{Groups defined by automata}

Let $\mathcal{A}=(Q, \lambda, \mu)$ be a finite permutational automaton over $\mathsf{X}$. 
The set of permutations $\{\mu_q, q\in Q\}$ generate a subgroup in the symmetric group on $\mathsf{X}$.  We call this group the  permutation group defined at states of $\mathcal{A}$. For a $p$-automaton the  permutation group defined at its states is the regular cyclic group of order $p$.

For every $q \in Q$ the permutation $\mu_q$ can be recursively extended to the permutation on the set $\mathsf{X}^*$ as follows:
\[
\mu_q(\Lambda)=\Lambda, \quad \mu_q(xw)=\mu_q(x)\mu_{\lambda(q,x)}(w), \quad x \in \mathsf{X}, w \in \mathsf{X}^*.
\]
Obtained in this way permutation $\mu_q$ is  length preserving, i.e. $|\mu_q(w)|=|w|$, $w \in \mathsf{X}^*$,  and prefix preserving, i.e. if for $w,w_1 \in \mathsf{X}^*$ and some $x \in \mathsf{X}$ the equality $w=w_1x$ holds then some $x_1 \in \mathsf{X}$ the equality $\mu_q(w)=\mu_q(w_1)x_1$  holds.  Hence, $\mu_q$  preserves the structure of a rooted tree on $\mathsf{X}^*$. It is called an automaton permutation defined by $\mathcal{A}$ at state $q$.  

The permutation group, generated by the set $\{\mu_q, q \in Q\}$ is called the group of the automaton $\mathcal{A}$ and denoted by $G(\mathcal{A})$. The restriction of its action on $\mathsf{X}$ is the  permutation group defined at states of $\mathcal{A}$. 

More generally, a group $G$ is called a group generated by a finite automaton over $\mathsf{X}$ if there exists a finite permutational automaton $\mathcal{A}$ over $\mathsf{X}$ such that $G$ is isomorphic to the group generated by a subset of automaton permutations defined at states of $\mathcal{A}$. In this case $G$ is isomorphic to a subgroup of automaton $\mathcal{A}$ generated by a subset of its generating set. 

In terms of \cite{MR2796035} a group generated by a finite automaton over $\mathsf{X}$ is a finitely generated subgroup of the finite state wreath power of the permutation group at states of an automaton over $\mathsf{X}$. In some cases the order and the degree of such a permutation group can be minimized.

\begin{theorem}
    \label{thm_minimize_group_at_states}
    Let $(G,\mathsf{X})$ and $(H,\mathsf{Y})$ be finite permutation groups such that $(G,\mathsf{X})$ is isomorphic as a permutation group to a subgroup of the wreath product of finitely many copies of $(H,\mathsf{Y})$. Then every group generated by a finite automaton such that the permutation group defined at its states is $(G,\mathsf{X})$ can be generated by a finite automaton, the permutation group defined at its states is $(H,\mathsf{Y})$.
\end{theorem}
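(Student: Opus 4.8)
The plan is to reduce the statement to one structural fact and then do the automata bookkeeping. By hypothesis there are a positive integer $k$ and a permutational monomorphism $\varphi\colon(G,\mathsf{X})\to\wr_{i=1}^{k}(H,\mathsf{Y})$; comparing degrees forces $|\mathsf{X}|=|\mathsf{Y}|^{k}$, so fix a bijection $\beta\colon\mathsf{X}\to\mathsf{Y}^{k}$ realising $\varphi$, that is, with $\beta(x^{g})=\beta(x)^{\varphi(g)}$ for all $x\in\mathsf{X}$, $g\in G$. Recall from the paragraph preceding the theorem that a group generated by a finite automaton over $\mathsf{X}$ whose permutation group at states is $(G,\mathsf{X})$ is precisely a finitely generated subgroup of the finite state wreath power $\mathcal{W}_{G,\mathsf{X}}$ of $(G,\mathsf{X})$ --- the group of all finite state automorphisms $g$ of the rooted tree $\mathsf{X}^{*}$ all of whose sections $g|_{w}$ (where $g(wv)=g(w)\,(g|_{w})(v)$) induce on the first level a permutation lying in $G$ --- and similarly over $\mathsf{Y}$. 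Hence it is enough to build a monomorphism $\Psi\colon\mathcal{W}_{G,\mathsf{X}}\to\mathcal{W}_{H,\mathsf{Y}}$ and then, for every finite automaton $\mathcal{A}=(Q,\lambda,\mu)$ over $\mathsf{X}$ with $(G,\mathsf{X})$ at its states, to exhibit a single finite automaton $\mathcal{B}$ over $\mathsf{Y}$ whose permutation group at states is $(H,\mathsf{Y})$ and among whose automaton permutations all the $\Psi(\mu_{q})$, $q\in Q$, occur.

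The map $\Psi$ is a \emph{block unfolding}. Extend $\beta$ to the injective tree map $\iota\colon\mathsf{X}^{*}\to\mathsf{Y}^{*}$, $\iota(x_{1}\cdots x_{n})=\beta(x_{1})\cdots\beta(x_{n})$, obtained by concatenating blocks of length $k$; its image is the set of all words over $\mathsf{Y}$ of length divisible by $k$. For $g\in\mathcal{W}_{G,\mathsf{X}}$ let $\bar g\in G$ be the permutation $g$ induces on $\mathsf{X}$, and define $\Psi(g)$ corecursively: on the top $k$ levels of $\mathsf{Y}^{*}$ let $\Psi(g)$ act as the automorphism of the depth-$k$ tree determined by $\varphi(\bar g)\in\wr_{i=1}^{k}(H,\mathsf{Y})$ --- so every permutation it induces on a single level is a coordinate value of $\varphi(\bar g)$ and hence lies in $H$ --- and below the leaf $\iota(x)$ of that depth-$k$ tree attach $\Psi(g|_{x})$, for $x\in\mathsf{X}$. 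This is well defined and satisfies $\Psi(g)\circ\iota=\iota\circ g$. Since $\varphi$ is a homomorphism and sections compose by $(gh)|_{w}=(g|_{h(w)})\circ(h|_{w})$, an induction on levels shows $\Psi(gh)=\Psi(g)\Psi(h)$; injectivity of $\Psi$ follows from $\Psi(g)\circ\iota=\iota\circ g$ and injectivity of $\iota$. That $\Psi(g)$ is again finite state will come out of the next step.

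Now fix a finite permutational automaton $\mathcal{A}=(Q,\lambda,\mu)$ over $\mathsf{X}$ with $(G,\mathsf{X})$ at its states, let $\lambda^{*}$ be its extended transition function (so $\mu_{q}|_{u}=\mu_{\lambda^{*}(q,u)}$ for all $u\in\mathsf{X}^{*}$), and for $q\in Q$ write $\varphi(\bar\mu_{q})=[\,h^{(q)}_{1},\,h^{(q)}_{2}(y_{1}),\,\ldots,\,h^{(q)}_{k}(y_{1},\ldots,y_{k-1})\,]$ with all coordinate values in $H$. Define $\mathcal{B}=(\widetilde Q,\widetilde\lambda,\widetilde\mu)$ over $\mathsf{Y}$ by $\widetilde Q=\bigl(Q\times\bigcup_{j=0}^{k-1}\mathsf{Y}^{j}\bigr)\cup R$, where at state $(q,y_{1}\cdots y_{j})$ one reads a letter $y\in\mathsf{Y}$, outputs through the permutation $h^{(q)}_{j+1}(y_{1},\ldots,y_{j})\in H$, and moves to the state $(q,y_{1}\cdots y_{j}y)$ if $j<k-1$ and to $\bigl(\lambda(q,\beta^{-1}(y_{1}\cdots y_{j}y)),\Lambda\bigr)$ if $j=k-1$; here $R$ is a finite set of auxiliary states, each looping to itself on every letter and carrying some permutation in $(H,\mathsf{Y})$, added so that the permutations occurring at states of $\mathcal{B}$ generate precisely $(H,\mathsf{Y})$. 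Unravelling the definitions, the automaton permutation of $\mathcal{B}$ at state $(q,\Lambda)$ equals $\Psi(\mu_{q})$: the point is that the finitely many states listed suffice, which holds because by $\mu_{q}|_{u}=\mu_{\lambda^{*}(q,u)}$ the block unfolding produces, at the block boundaries, only the finitely many automorphisms $\Psi(\mu_{q'})$, $q'\in Q$, and, between consecutive boundaries, only the within-block sections of the $\varphi(\bar\mu_{q'})$, indexed exactly by the partial blocks $(q',v)$ with $v\in\bigcup_{j<k}\mathsf{Y}^{j}$. In particular each $\Psi(\mu_{q})$ is finite state, so $\Psi$ indeed maps $\mathcal{W}_{G,\mathsf{X}}$ into $\mathcal{W}_{H,\mathsf{Y}}$. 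Finally, if the given group is $\langle\mu_{q}:q\in S\rangle$ for some $S\subseteq Q$, then $\Psi$ restricts to an isomorphism onto $\langle\Psi(\mu_{q}):q\in S\rangle=\langle\widetilde\mu_{(q,\Lambda)}:q\in S\rangle$, a subgroup generated by a subset of the automaton permutations of the finite automaton $\mathcal{B}$, whose permutation group at states is $(H,\mathsf{Y})$. This is exactly the conclusion of the theorem.

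I expect the main obstacle to be the middle step --- making the corecursive block unfolding $\Psi$ precise and checking that it is a homomorphism which moreover preserves the finite state property. The homomorphism check is sensitive to the convention for how sections behave under composition, and the finite state check rests on the small but easily botched observation that unfolding a single automaton step into $k$ letters introduces exactly one new state per partial block $v\in\bigcup_{j<k}\mathsf{Y}^{j}$. Both are notation-heavy but use nothing beyond the stated hypothesis together with the standard self-similarity of the rooted tree $\mathsf{Y}^{*}$.
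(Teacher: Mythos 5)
Your proposal is correct and follows essentially the same route as the paper's own proof: the same block-encoding of $\mathsf{X}$ as length-$k$ words over $\mathsf{Y}$ via the intertwining map, the same state set $Q\times\bigcup_{j=0}^{k-1}\mathsf{Y}^{j}$ with the same transition and output logic, and the same verification that the automaton permutation at $(q,\Lambda)$ realises $\mu_q$. The only differences are cosmetic: you read the hypothesis so that the encoding is a bijection onto $\mathsf{Y}^{k}$, which lets you drop the fall-back clauses the paper needs because it only assumes an injection $\psi\colon\mathsf{X}\to\mathsf{Y}^{r}$, you add padding states to make the group at states exactly $(H,\mathsf{Y})$, and you package the correctness check as a monomorphism of finite-state wreath powers rather than as a direct computation on words.
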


\begin{proof}
Assume that $(G,\mathsf{X})$ is isomorphic as a permutation group to a subgroup of the wreath product of $r$  copies of $(H,\mathsf{Y})$ for some $r \ge 1$. Denote by $\varphi$ an isomorphic embedding of $G$ into $\wr_{i=1}^{r} H^{(i)} $, $H^{(i)} \simeq H$, $1\le i \le r$,  and by $\psi$ an injection from $\mathsf{X}$ to $\mathsf{Y}^r$ such that 
\[
\psi(x^g)=(\psi(x))^{\varphi(g)}, \quad x \in \mathsf{X}, g \in G.
\]
Denote by $\mathsf{Y}_1$ the image of $\mathsf{X}$ under $\psi$. 
Then $(G,\mathsf{X})$ is isomorphic as a permutation group to $(\varphi(G), \mathsf{Y}_1)$. Recall that the wreath product $\wr_{i=1}^{r} H^{(i)}$ acts on the union 
$\cup_{i=0}^{r-1}\mathsf{Y}^{i}$. This action is length preserving and prefix preserving. Hence, $\varphi(G)$ acts on the set $\mathsf{Y}_2$ that consists of all prefixes of all words from $\mathsf{Y}_1$.

Let $\mathcal{A}=(Q, \lambda, \mu)$ be a finite permutational automaton over $\mathsf{X}$ such that the permutation group defined at its states is $(G,\mathsf{X})$. It is sufficient to show that the group $G(\mathcal{A})$ of this automaton can be  generated by a finite automaton over $\mathsf{Y}$ such that the permutation group defined at its states is $(H,\mathsf{Y})$. We define a corresponding automaton $\mathcal{B}= (Q_1,\lambda_1, \mu_1)$.

The set of sates $Q_1$ of $\mathcal{B}$ is the set of all possible pairs of the form
$(q,w)$, where $q$ is a state of $\mathcal{A}$ and $w$ is a word from $\mathsf{Y}$ of length not greater than $r-1$. In other words, it is defined as the Cartesian product
\[
Q_1 = Q \times \left( \cup_{i=0}^{r-1}\mathsf{Y}^{i} \right).
\]

The transition function $\lambda_1$ is defined by the equality
\[
\lambda_1((q,w),y)=
\begin{cases}
    (q,wy), & \text{ if } |w|<r-1 \text{ and } wy \in \mathsf{Y}_2 \\
    (\lambda(q,\psi^{-1}(wy)),\Lambda),&  \text{ if } |w|=r-1 \text{ and } wy \in \mathsf{Y}_1\\
    (q,w),&  \text{ otherwise }
\end{cases}.
\]
Since $\psi$ is an injection the definition is correct.

The output function $\mu_1$ is defined by the equality
\[
\mu_1((q,w),y)=
\begin{cases}
    y_1, & \text{ if }  wy \in \mathsf{Y}_2 \text{ and } (wy)^{\varphi(\mu_q)}=
    (w)^{\varphi(\mu_q)}y_1\\
    y,&  \text{ otherwise }
\end{cases}.
\]
Since $\varphi(\mu_q)$, $q \in Q$, is length preserving and prefix preserving on $\mathsf{Y}_2$ the definition is correct.

It is required to find a subset $S \subset Q_1$ such that the group $G(\mathcal{A})$ is isomorphic to the group $G_S$ generated by the set $\{{\mu_1}_s, s \in S \}$. We will show that one can take the subset $S=\{(q,\Lambda), q\in Q \}$. It is enough to prove that the mapping $\mu_q \mapsto {\mu_1}_{(q,\Lambda)}$, $q \in Q$, defines a required isomorphism between $G(\mathcal{A})$ and $G_S$.

Consider the monoid monomorphism $\Phi: \mathsf{X}^* \to \mathsf{Y}^*$ that extends injection $\varphi$. Then the image $\Phi(\mathsf{X}^*)$ is a free monoid with basis 
$\mathsf{Y}_1$, i.e. $\Phi(\mathsf{X}^*)=\mathsf{Y}_1^*$. 

For arbitrary $q \in Q$, $x \in \mathsf{X}$ and $w \in \mathsf{X}^*$ we have the equalities
\[
\Phi(xw)^{{\mu_1}_{(q,\Lambda)}}=(\varphi(x)\Phi(w))^{{\mu_1}_{(q,\Lambda)}}=
(\varphi(x))^{{\mu_1}_{(q,\Lambda)}}\Phi(w)^{{\mu_1}_{(\lambda(q,x),\Lambda)}}=
\varphi(x^{{\mu}_{q}})\Phi(w)^{{\mu_1}_{(\lambda(q,x),\Lambda)}}.
\]
Since $\varphi(x) \in \mathsf{Y}_1$  the last equality follows from the definition of the output function $\mu_1$. 
Therefore, permutations groups  $(G(\mathcal{A}), \mathsf{X}^*)$ and $(G_S,\mathsf{Y}_1^*)$ are isomorphic as permutation groups.

Consider arbitrary $w\in \mathsf{Y}^*$. Then there exist unique 
$w_1\in \mathsf{Y}_1^*$, $w_2\in \mathsf{Y}_2 \setminus \mathsf{Y}_1$, $w_3\in \mathsf{Y}^*$ such that
$w=w_1w_2w_3$ and the word $w_2$ is the longest prefix of $w_2w_3$ from $\mathsf{Y}_2$.
Let $q \in Q$. Then
\[
w^{{\mu_1}_{(q,\Lambda)}}=w_1^{{\mu_1}_{(q,\Lambda)}}w_4w_3 
\]
for some $w_4 \in \mathsf{Y}_2 \setminus \mathsf{Y}_1$ such that for arbitrary $w_1w_2u \in \mathsf{Y}_1^*$, $u \in \mathsf{Y}^*$, the word $w_1^{{\mu_1}_{(q,\Lambda)}}w_4$ is a prefix of $(w_1w_2u)^{{\mu_1}_{(q,\Lambda)}}$.
Hence, the action of the automaton permutation ${\mu_1}_{(q,\Lambda)}$ on $\mathsf{Y}^*$
is completely defined by its action on $\mathsf{Y}_1^*$.
The proof is complete.
\end{proof}

\section{HNN extensions and $p$-automata}
\label{section_hnn_actions_defined_by_automata}

\subsection{HNN extensions of free abelian groups}

Let  $A_r = \langle a_1,\ldots, a_r \mid 
        a_ia_j=a_ja_i, 1\le i<j\le r
        \rangle $ be a free abelian group of rank $r\ge 1$.
For a non-degenerate integer matrix $M=(m_{ij})_{i,j=1}^{r}$
consider the group         
\[
\mathbb{G}_{M}=\langle A_r, t \mid  
a_i^{t}=a_1^{m_{i1}} \ldots a_r^{m_{ir}}, 1 \le i \le r  \rangle.
\]
Then $\mathbb{G}_{M}$ is an ascending HNN extension of $A_r$.

\begin{proposition}[\cite{MR2216703}]
\label{bartholdi_sunik_theorem}
Let the order of $M$ is infinite and for positive integer $n\ge 2$ the determinant of $M$ is relatively prime to $n$. Then there exist a finite permutational automaton $\mathcal{A}_M$ over $\mathsf{X}=\{0,\ldots , n-1 \}^r$ such that the group of $\mathcal{A}_M$ is isomorphic to $\mathbb{G}_{M}$.
\end{proposition}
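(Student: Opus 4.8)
The plan is to realise $\mathbb{G}_M$ directly as an automaton group on the tree $\mathsf{X}^*$ with $\mathsf{X}=\{0,\ldots,n-1\}^r$. Reading the letters of an infinite path least–significant–first, I would identify the set of infinite paths of $\mathsf{X}^*$ with $\widehat{\mathbb{Z}}_n^{\,r}$, where $\widehat{\mathbb{Z}}_n=\varprojlim_{k}\mathbb{Z}/n^{k}\mathbb{Z}$ is the ring of $n$-adic integers, regarded as a module of row vectors over itself; the vertices of level $k$ are then $(\mathbb{Z}/n^{k}\mathbb{Z})^{r}$, a length- and prefix-preserving permutation of $\mathsf{X}^*$ is the same as a permutation of $\widehat{\mathbb{Z}}_n^{\,r}$ respecting all congruences modulo $n^{k}$, and it is finite-state exactly when this permutation can be computed letter by letter with a uniformly bounded carry.

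For the generators I would take, for $1\le i\le r$, the $n$-adic adding machine $a_i\colon x\mapsto x+e_i$ on $\widehat{\mathbb{Z}}_n^{\,r}$ ($e_i$ the $i$-th standard row vector), which is a classical finite-state (two-state) automorphism, and $t\colon x\mapsto xM$. The step I expect to be the real obstacle is verifying two properties of $t$. First, $t$ is a tree automorphism: it respects every congruence modulo $n^{k}$ because $M$ has integer entries, and it is bijective because $\det M$ is a unit in $\widehat{\mathbb{Z}}_n$ — which is precisely the hypothesis $\gcd(\det M,n)=1$. Second, $t$ is finite-state: processing one letter of $x$ at a time, the carry is an integer vector which, by an elementary estimate depending only on $n$ and the entries of $M$, stays inside a fixed bounded box, so only finitely many sections occur; each such section has the form $x\mapsto xM+c$ for an integer vector $c$ in that box, i.e.\ equals $t\,a_1^{c_1}\cdots a_r^{c_r}$. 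Taking $\mathcal{A}_M$ to be the finite automaton assembled from the (finitely many) states of the $a_i$ and of $t$, one then has $G(\mathcal{A}_M)=\langle a_1,\ldots,a_r,t\rangle$.

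Next I would check that $a_1,\ldots,a_r,t$ satisfy the defining relations of $\mathbb{G}_M$: the $a_i$ commute because translations commute, and the direct computation $x^{t^{-1}a_it}=(xM^{-1}+e_i)M=x+e_iM=x+\sum_{j=1}^{r}m_{ij}e_j$ shows that $a_i^{t}$ acts exactly as $a_1^{m_{i1}}\cdots a_r^{m_{ir}}$. By von Dyck this yields a surjective homomorphism $\mathbb{G}_M\twoheadrightarrow G(\mathcal{A}_M)$, and it remains to show injectivity, i.e.\ that $\mathbb{G}_M$ acts faithfully on $\mathsf{X}^*$ — equivalently on $\widehat{\mathbb{Z}}_n^{\,r}$, since a tree automorphism fixing every infinite path is trivial.

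For faithfulness I would invoke the standard structure of an ascending HNN extension: $\mathbb{G}_M=B\rtimes\langle t\rangle$ with $t$ of infinite order, where the normal closure $B=\bigcup_{k\ge 0}t^{k}A_rt^{-k}$ acts, under the above, as translation by $\bigcup_{k\ge 0}\mathbb{Z}^{r}M^{-k}\subseteq\mathbb{Q}^r$. Since the entries of $M^{-k}$ have denominators dividing a power of $\det M$, a unit in $\widehat{\mathbb{Z}}_n$, the group $B$ embeds in $\widehat{\mathbb{Z}}_n^{\,r}$ and so acts faithfully by translations; and $t^{k}$ acts as $x\mapsto xM^{k}$, which is non-trivial for $k\ne 0$ because $M$ has infinite order (already $M^{k}\ne I$ over $\mathbb{Z}\subseteq\widehat{\mathbb{Z}}_n$). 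Hence an arbitrary element $b\,t^{k}$ acts as an affine map with linear part $M^{k}$, which can be the identity only when $M^{k}=I$ and the translation part vanishes, that is, only when $b\,t^{k}=1$ in $\mathbb{G}_M$. Therefore the homomorphism is injective and $\mathbb{G}_M\cong G(\mathcal{A}_M)$. Summarising, the two hypotheses each enter once and essentially: coprimality of $\det M$ with $n$ makes $t$ a bijection and keeps $B$ discrete in $\widehat{\mathbb{Z}}_n^{\,r}$, while infinite order of $M$ makes the $\langle t\rangle$-direction act faithfully; the one genuinely laborious point is the bounded-carry estimate that makes $t$ finite-state.
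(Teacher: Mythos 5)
Your proposal is correct and follows essentially the same route as the source: the paper itself offers no proof of this proposition (it is imported from the cited reference and only the construction of $\mathcal{A}_M$ is recalled), and your "bounded carry box" is precisely that construction's state set $Q=\{v : -n(M)\le v_i\le n(M)-1\}$ with transition $\mathrm{Div}_n(v+Mx)$ and output $\mathrm{Mod}_n(v+Mx)$, each state acting as $\xi\mapsto M\xi+v$ on $n$-adic vectors. Your verification of the relations and the faithfulness argument via the affine action of $B\rtimes\langle t\rangle$ on $\widehat{\mathbb{Z}}_n^{\,r}$, using coprimality of $\det M$ with $n$ and the infinite order of $M$ exactly where needed, is a sound reconstruction of the cited argument.
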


Let us recall the construction of automaton $\mathcal{A}_M= (Q, \lambda, \mu)$ from~\cite{MR2216703}. Denote by $n(M)$ the max norm of the matrix $M$, i.e.
\[
n(M)=\max_{i}\sum_{j}|m_{ij}|.
\]
Then the set of states $Q$ is defined as 
\[
Q=\{(v_1,\ldots, v_r)^{\top} \in \mathbb{Z}^r : -n(M)\le v_i \le n(M)-1, 1\le i \le r\}
\]
and for $v=(v_1,\ldots, v_r)^{\top} \in Q$, $x=(x_1,\ldots, x_r)^{\top} \in \mathsf{X}$ we have
\[
\lambda(v,x)=\mathrm{Div}_n(v+Mx), \quad 
\mu(v,x)=\mathrm{Mod}_n(v+Mx),
\]
where $\mathrm{Div}_n$ and $\mathrm{Mod}_n$ denote operations of taking coordinate-wise quotients and reminders from division by $n$.

\subsection{${p}$-automata defining HNN extensions}

The main result of this section is the following 

\begin{theorem}
    Let $p$ be a prime, $r = p^k$ for some $k \ge 1$ and integer $r\times r$ matrix $M$ 
    has the form     
    \[M=pN+C
    \]
    for some $r \times r$ integer matrix $N$ and permutation matrix $C$ that correspond to a permutation of order $p^m$ for some $m\ge 0$. Then 
    \begin{enumerate}
        \item 
        the group $\mathbb{G}_{M}$ is generated by a finite $p$-automaton;
        \item 
        the group $\mathbb{G}_{M}$ is residually $p$-finite.
    \end{enumerate}
\end{theorem}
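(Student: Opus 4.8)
The plan is to realize $\mathbb{G}_M$ as the group of the automaton $\mathcal{A}_M$ via Proposition~\ref{bartholdi_sunik_theorem} and then to shrink the alphabet to $p$ symbols by means of Theorems~\ref{thm_exponentiation_acts_on_tree} and~\ref{thm_minimize_group_at_states}. First I would record two consequences of the hypothesis $M = pN+C$. Reducing entrywise modulo $p$ gives $M \equiv C \pmod p$, so $\det M \equiv \det C = \pm 1 \pmod p$ and in particular $\gcd(\det M,p)=1$. Also, if $M$ had finite order then $\det M = \pm 1$, the defining endomorphism of $A_r$ would be an automorphism, and $\mathbb{G}_M \cong \mathbb{Z}^r \rtimes_M \mathbb{Z}$ would be virtually $\mathbb{Z}^{r+1}$; this case I would dispose of by hand (residual $p$-finiteness through the quotients by $\langle A_r^{p^j}, t^{p^l}\rangle$ with $p^l$ a multiple of the order of $M$, and the $p$-automaton by a direct construction). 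Assuming henceforth that $M$ has infinite order, I would apply Proposition~\ref{bartholdi_sunik_theorem} with $n = p$ to obtain a finite permutational automaton $\mathcal{A}_M=(Q,\lambda,\mu)$ over $\mathsf{X}=\{0,\ldots,p-1\}^r$ with $G(\mathcal{A}_M)\cong\mathbb{G}_M$.

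The key step is to identify the permutation group defined at the states of $\mathcal{A}_M$. Identifying $\mathsf{X}$ with $\mathsf{V}=\mathbb{Z}_p^r$, the recalled formula $\mu(v,x)=\mathrm{Mod}_p(v+Mx)$ together with $M\equiv C\pmod p$ shows that $\mu_v$ is the affine permutation $x\mapsto Cx+\bar v$ of $\mathsf{V}$, with fixed linear part $C$. Since $M$ has infinite order, $N\ne 0$, hence $n(M)\ge p-1$ and the reductions modulo $p$ of the vectors of $Q$ already run over all of $\mathsf{V}$; therefore $\{\mu_v:v\in Q\}$ generates the group of all affine maps $x\mapsto C^jx+u$, i.e.\ the semidirect product $\mathsf{V}\rtimes\langle C\rangle$ in which $\langle C\rangle$ permutes the $r$ coordinates of $\mathsf{V}$. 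As $C$ corresponds to a permutation of order $p^m$ acting faithfully on the $r$-element index set, this group is precisely the exponentiation $\mathbb{Z}_p\uparrow\langle C\rangle$ with $\mathbb{Z}_p$ acting regularly on itself; in particular it is a finite $p$-group of degree $|\mathsf{V}|=p^r$.

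Now I would feed this into the two structural results. Because $r=p^k$, the exponentiation $\mathbb{Z}_p\uparrow\langle C\rangle$ satisfies the hypotheses of Theorem~\ref{thm_exponentiation_acts_on_tree} (here $\mathbb{Z}_p$ acts on $p^1$ points and $\langle C\rangle$ on $p^k$ points), so it is isomorphic, as a permutation group, to a subgroup of the wreath product of $p^k$ copies of the regular cyclic group of order $p$. Hence the permutation group at the states of $\mathcal{A}_M$ meets the hypothesis of Theorem~\ref{thm_minimize_group_at_states} taken with $(H,\mathsf{Y})$ equal to the regular cyclic group of order $p$; applying that theorem, $\mathbb{G}_M=G(\mathcal{A}_M)$ is generated by a finite automaton $\mathcal{B}$ over an alphabet of cardinality $p$ whose permutation group at states is the regular cyclic group of order $p$. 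Every output permutation of $\mathcal{B}$ then lies in the cyclic group generated by a fixed $p$-cycle, so $\mathcal{B}$ is a $p$-automaton, which is assertion~(1).

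For (2) I would argue that $\mathbb{G}_M$, being generated by the finite $p$-automaton $\mathcal{B}$, embeds in the group of $\mathcal{B}$ and hence acts faithfully on the rooted $p$-regular tree $\mathsf{Y}^*$; since at every vertex the local action is a power of the fixed $p$-cycle, the image of $\mathbb{G}_M$ in the automorphism group of the depth-$\ell$ subtree lies in the wreath product of $\ell$ copies of $\mathbb{Z}_p$, which is a finite $p$-group, so the level stabilizers form a descending chain of finite-index normal subgroups with $p$-group quotients and trivial intersection; thus $\mathbb{G}_M$ is residually $p$-finite. I expect the hard part to be the computation of the second paragraph — checking that modulo $p$ the transition and output data of the automaton of Proposition~\ref{bartholdi_sunik_theorem} collapse exactly to the affine group $\mathbb{Z}_p^r\rtimes\langle C\rangle$, and that the state set $Q$ is large enough to realize every translation, so that this group really is the exponentiation $\mathbb{Z}_p\uparrow\langle C\rangle$ and not merely a proper subgroup of it. The remaining point requiring care is not to overlook the finite-order case of $M$, which has to be treated separately as indicated.
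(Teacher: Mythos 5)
Your proposal is correct and follows essentially the same route as the paper: realize $\mathbb{G}_{M}$ as the group of $\mathcal{A}_M$ via Proposition~\ref{bartholdi_sunik_theorem}, identify the permutation group defined at the states with the exponentiation of the regular $\mathbb{Z}_p$ by $\langle C\rangle$, and then combine Theorems~\ref{thm_exponentiation_acts_on_tree} and~\ref{thm_minimize_group_at_states} to pass to a $p$-automaton, with residual $p$-finiteness read off from the $p$-automaton action. Where you deviate, you are mostly being more careful than the paper. You derive $\gcd(\det M,p)=1$ from $\det M\equiv\det C\equiv\pm1\pmod p$, whereas the paper asserts $|\det M|=1$, which is false in general (take $p=2$, $C=I$, $N=I$, so $M=3I$ and $\det M=9$), though the coprimality it needs is exactly what your congruence gives. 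You also notice that Proposition~\ref{bartholdi_sunik_theorem} assumes $M$ has infinite order and split off the finite-order case; the paper applies the proposition without checking this hypothesis, so your case distinction addresses a real gap --- but your treatment of that branch is only a promise of ``a direct construction'' for the virtually abelian group $\mathbb{Z}^r\rtimes_M\mathbb{Z}$, so it would still have to be written out in full. Your verification that the states generate the whole affine group $\mathbb{Z}_p^r\rtimes\langle C\rangle$ is slightly cleaner as well: you use $N\ne0$, hence $n(M)\ge p-1$, to get every translation among the $\mu_v$, while the paper uses only the $r$ states $u_1,\dots,u_r$ and asserts without proof that these generate the full wreath product. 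Finally, your explicit level-stabilizer argument for part (2) replaces the paper's citation of~\cite{MR4296450}; both are adequate.
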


\begin{proof}
Since $|\det M|=1$ this determinant is relatively prime to $p$ and matrix $M$ satisfy conditions of Proposition~\ref{bartholdi_sunik_theorem}. Then the group of the automaton $\mathcal{A}_M$ is isomorphic to $\mathbb{G}_{M}$. Let us describe the permutation group $(G,\mathsf{X})$ defined at states of this automaton.

Denote by $\mathsf{X}_1$ the set $\{1,\ldots,r\}$ and by $\mathsf{X}_2$ the set $\{0,\ldots,p-1\}$. Then the alphabet $\mathsf{X}$ can be identified with the set $\mathsf{X}_2^{\mathsf{X}_1}$ of all functions from $\mathsf{X}_1$ to $\mathsf{X}_2$.

Denote by $\sigma$ the permutation on $\mathsf{X}_1$ such that the matrix $C$ corresponds to $\sigma$. Since $\sigma$ has order $p^m$ the lengths of its independent cycles are powers of $p$ not greater than $p^m$ and at least one of them is $p^m$. 
Denote by $G_1$ the cyclic group generated by $\sigma $. Then $|G_1|=p^m$.

Let $G_2$ be a cyclic group of order $p$. It acts on $\mathsf{X}_2$ by additions modulo $p$.

Consider arbitrary state $v=(v_1,\ldots, v_r)^{\top}$ of $\mathcal{A}_M$. Then the vector
$\mathrm{Mod}_p(v)$ can be regarded as a function from $\mathsf{X}_1$ to $G_2$. For arbitrary vector $x=(x_1,\ldots, x_r)^{\top} \in \mathsf{X}$ we have
\[
\mu_v(x)=\mathrm{Mod}_p(v+Mx)=\mathrm{Mod}_p(v+(pN+C)x)=\mathrm{Mod}_p(v+Cx)
\]
and coordinate-wise 
\[
\mu_v(x)=((x_{\sigma(i)}+v_i) \mod{p} , 1\le i \le r).
\]
It means that the permutation $\mu_v$ acts on $\mathsf{X}_2^{\mathsf{X}_1}$ by the rule,  
that defines a permutation from the exponentiation $G_2 \uparrow G_1$, i.e. the permutation $\mu_v$ is defined by  the element $[\sigma, \mathrm{Mod}_p(v)]$ of the wreath product $G_1 \wr G_2$, the wreath product of cyclic groups of orders $p^{m}$ and $p$.
Since $n(M)\ge 1$ all vectors 
\[
u_1=(-1,0,\ldots, 0), \ldots, u_r=(0,\ldots , 0,-1) 
\]
belong to the set $Q$ of states of $\mathcal{A}_M$. Hence, the set 
\[
\{
\mu_{u_1}, \ldots \mu_{u_r}
\}
\]
generates the wreath product $G_1 \wr G_2$. Therefore, the permutation group  $(G,\mathsf{X})$ is the exponentiation $G_2 \uparrow G_1$.

Theorem~\ref{thm_exponentiation_acts_on_tree} now implies that  $(G,\mathsf{X})$ is isomorphic as a permutation group to the wreath product of  copies of the regular
cyclic group or order $p^m$ acting by automorphisms on the set of leaves of the $p$-regular rooted tree of depth $p^m$. Then Theorem~\ref{thm_minimize_group_at_states} implies that the group $\mathbb{G}_{M}$ can be generated by a finite automaton with regular cyclic group as the permutation group defined at its states, i.e. by a $p$-automaton. This completes the proof of the first statement of the theorem. 

Since all groups defined by finite $p$-automata are residually $p$-finite (see e.g.~\cite{MR4296450}) the second statement now immediately follows. 
\end{proof}

\section*{Acknowledgements}
The research presented in the paper was done during the fellowship of the second author at the Institute of Mathematics of the Polish Academy of Sciences supported by 
the European Research Council (ERC) under the European Union’sHorizon 2020 research and innovation programme (Grant Agreement~No.~677120-INDEX)
and
Grant Norweski UMO-2022/01/4/ST1/00026. 
% This project has received funding from the European Research Council (ERC) under the European Union’s Horizon 2020 research and innovation programme (Grant agreement no. 677120-INDEX).

% has received funding from the European Research Council (ERC) under the European Union’sHorizon 2020 research and innovation programme (Grant Agreement No. 677120-INDEX)
%\bibliographystyle{plain}
\bibliographystyle{amsplain}
\bibliography{references}

\end{document}